\title{On Tur\'an type inequalities for modified Bessel functions}
\author[\'Arp\'ad Baricz]{\'Arp\'ad Baricz}
\author[Saminathan Ponnusamy]{Saminathan Ponnusamy}
\address{Department of Economics, Babe\c{s}-Bolyai University,
Cluj-Napoca 400591, Romania} \email{bariczocsi@yahoo.com}
\address{Department of Mathematics, Indian Institute of Technology Madras, Chennai 600036,
India} \email{samy@iitm.ac.in}
\newtheorem{theorem}{Theorem}
\newtheorem{remark}{Remark}
\keywords{Modified Bessel functions, Tur\'an-type inequalities,
completely monotonic functions} \subjclass[2010]{33C10, 39B62}
\begin{document}
\maketitle


\begin{abstract}
In this note our aim is to point out that certain inequalities for
modified Bessel functions of the first and second kind, deduced
recently by Laforgia and Natalini, are in fact equivalent to the
corresponding Tur\'an type inequalities for these functions.
Moreover, we present some new Tur\'an type inequalities for the
aforementioned functions and we show that their product is
decreasing as a function of the order, which has application in the
study of stability of radially symmetric solutions in a generalized
FitzHugh-Nagumo equation in two spatial dimensions. At the end of
this note a conjecture is posed, which may be of interest for
further research.
\end{abstract}

\section{Some inequalities for modified Bessel functions}
Let us denote with $I_{\nu}$ and $K_{\nu}$ the modified Bessel
functions of the first and second kind, respectively. For
definitions, recurrence formulas and other properties of modified
Bessel functions of the first and second kind we refer to the
classical book of G.N. Watson \cite{watson}.

In 2007, motivated by a problem which arises in biophysics, Penfold
et al. \cite{penfold} proved that the product of the modified Bessel
functions of the first and second kind, i.e. $u\mapsto
P_{\nu}(u)=I_{\nu}(u)K_{\nu}(u),$ is strictly decreasing on
$(0,\infty)$ for all $\nu\geq 0.$ It is worth mentioning that this
result for $\nu=n\geq 1,$ a positive integer, was verified in 1950
by Phillips and Malin \cite{phillips}. In order to shorten the proof
due to Penfold et al. \cite{penfold}, recently the first author
\cite{bariczprod} pointed out that the Tur\'an type inequalities for
modified Bessel functions of the first and second kinds are in fact
equivalent to some known inequalities for the logarithmic
derivatives of the functions in the question. For reader's
convenience we recall here the historical facts for these Tur\'an
type inequalities (see \cite{bariczprod,bariczaust} for more
details). More precisely, in view of the recurrence relations
\begin{equation}\label{r1}I_{\nu-1}(u)=(\nu/u) I_{\nu}(u)+I'_{\nu}(u)\end{equation}
and $$I_{\nu+1}(u) = I'_{\nu}(u) - (\nu/u) I_{\nu}(u),$$ the Tur\'an
type inequality
\begin{equation}\label{t1}
I_{\nu-1}(u) I_{\nu+1}(u) - \left[I_{\nu}(u)\right]^2 < 0
\end{equation}
is equivalent to
\begin{equation}\label{b1}
{uI_{\nu}'(u)}/{I_{\nu}(u)}<\sqrt{u^2+\nu^2},
\end{equation}
where $\nu>-1$ and $u>0.$ To the best of authors knowledge the
Tur\'an type inequality \eqref{t1} for $\nu>-1$ was proved first in
1951 by Thiruvenkatachar and Nanjundiah \cite{tiru} and later in
1974 by Amos \cite[p. 243]{amos} for $\nu\geq0.$ In 1991 Joshi and
Bissu \cite{joshi} proved also \eqref{t1} for $\nu\geq0,$ while in
1994 Lorch \cite{lorch} proved that \eqref{t1} in fact holds for all
$\nu\geq -1/2$ and $u>0.$ Recently, the first author
\cite{bariczaust} reconsidered the proof of Joshi and Bissu
\cite{joshi} and pointed out that \eqref{t1} and \eqref{b1} hold
true for all $\nu>-1$ and the constant zero on the right-hand side
of \eqref{t1} is the best possible. It is worth to mention that in
fact the function $\nu\mapsto I_{\nu}(u)$ is log-concave on
$(-1,\infty)$ for each fixed $u>0,$ as it was pointed out by Baricz
\cite{baricz4}. Finally, we note that the inequality \eqref{b1} for
$\nu>0$ was first proved by Gronwall \cite{gronwall} in 1932,
motivated by a problem in wave mechanics. This inequality in 1950
appeared also in Phillips and Malin's paper \cite{phillips} for
$\nu=n\geq 1,$ a positive integer.

Similarly, by using the recurrence relations
\begin{equation}\label{r2}K_{\nu-1}(u)=-(\nu/u)K_{\nu}(u)-K'_{\nu}(u)\end{equation}
and \begin{equation}\label{r3}K_{\nu+1}(u) =
-K'_{\nu}(u)+(\nu/u)K_{\nu}(u),\end{equation} it is easy to prove
(see \cite{bariczprod,bariczaust}) that the Tur\'an type inequality
\begin{equation}\label{t2}
K_{\nu-1}(u)K_{\nu+1}(u)-\left[K_{\nu}(u)\right]^2>0
\end{equation}
is equivalent to
\begin{equation}\label{b2}
uK_{\nu}'(u)/K_{\nu}(u)<-\sqrt{u^2+\nu^2},
\end{equation}
where $\nu\in\mathbb{R}$ and $u>0.$ The Tur\'an type inequality
\eqref{t2} was proved in 1978 by Ismail and Muldoon \cite{ismail}
and recently by the author (see \cite{baricz4,bariczaust}) by using
different methods. The constant zero on the right-hand side of
\eqref{t2} is the best possible. Note that for $\nu>1/2$ the Tur\'an
type inequality \eqref{t2} appears also on Laforgia and Natalini's
paper \cite{laforgia}. It is worth also to mention here that Ismail
and Muldoon \cite{ismail}, by using the Nicholson formula concerning
the product of two modified Bessel functions of different order,
proved that \cite{ismail} the function $\nu\mapsto
K_{\nu+a}(u)/K_{\nu}(u)$ is increasing on $\mathbb{R}$ for each
fixed $u>0$ and $a>0.$ As Muldoon \cite{muldoon2} pointed out, this
implies that $\nu\mapsto K_{\nu}(u)$ is log-convex on $\mathbb{R}$
for each fixed $u>0.$ Recently, by using the H\"older-Rogers
inequality, the first author \cite{baricz4} pointed out that the
function $\nu\mapsto K_{\nu}(u)$ is in fact strictly log-convex on
$\mathbb{R}$ for each fixed $u>0.$ Finally, we note that the
inequality \eqref{b2} was proved for $\nu=n\geq 1,$ a positive
integer, by Phillips and Malin \cite{phillips} in 1950.

Recently, motivated by some applications in finite elasticity,
Laforgia and Natalini \cite{natalini} proved the following
inequalities
\begin{equation}\label{l1}
\frac{I_{\nu}(u)}{I_{\nu-1}(u)}>\frac{-\nu+\sqrt{u^2+\nu^2}}{u}
\end{equation}
and
\begin{equation}\label{l2}
\frac{K_{\nu}(u)}{K_{\nu-1}(u)}<\frac{\nu+\sqrt{u^2+\nu^2}}{u},
\end{equation}
where $u>0$ and $\nu\geq0$ in the first inequality, and $u>0$ and
$\nu\in\mathbb{R}$ in the second inequality. However, inequality
\eqref{l1} is not new, as far as we know it was proved first by Amos
\cite{amos} in 1974. It is important to note here that Laforgia and
Natalini in order to deduce \eqref{l1} and \eqref{l2} used the
Tur\'an type inequalities \eqref{t1} and \eqref{t2}. Moreover,
inequality \eqref{l1} improves a recently deduced inequality by the
first author \cite{bariczjmaa}, which is useful in the study of the
generalized Marcum $Q-$function applied frequently in radar signal
processing. See \cite{bariczjmaa,natalini} for more details and also
\cite{baricz1,baricz2,baricz3,baricz4,bariczaust,joshi,laforgia,lorch,tiru}
for more details on Tur\'an type inequalities.

Here our aim is to show that the inequalities \eqref{l1} and
\eqref{l2} are in fact equivalent to the corresponding Tur\'an type
inequalities \eqref{t1} and \eqref{t2}. To see this we just need to
rewrite the inequality \eqref{b1}, by using \eqref{r1}, in the
following form
$$\frac{I_{\nu-1}(u)}{I_{\nu}(u)}<\frac{\nu+\sqrt{u^2+\nu^2}}{u},$$
which is actually equivalent to \eqref{l1}. Thus, the inequalities
\eqref{t1}, \eqref{b1} and \eqref{l1} are equivalent and hold true
for all $\nu>-1$ and $u>0.$ Similarly, in view of \eqref{r2}, the
inequality \eqref{b2} is equivalent to
$$\frac{K_{\nu-1}(u)}{K_{\nu}(u)}>\frac{-\nu+\sqrt{u^2+\nu^2}}{u},$$
which is in fact equivalent to \eqref{l2}. Thus, the inequalities
\eqref{t2}, \eqref{b2} and \eqref{l2} are equivalent.

Moreover, by using the corresponding counterparts of the Tur\'an
type inequalities \eqref{t1} and \eqref{t2} (or inequalities
\eqref{b1} and \eqref{b2}) we can obtain in a similar way the
corresponding counterparts of inequalities \eqref{l1} and
\eqref{l2}. More precisely, consider the Tur\'an type inequalities
\begin{equation}\label{t3}
I_{\nu}^2(u)-I_{\nu-1}(u)I_{\nu+1}(u)<I_{\nu}^2(u)/(\nu+1)
\end{equation}
and
\begin{equation}\label{t4}
K_{\nu}^2(u)-K_{\nu-1}(u)K_{\nu+1}(u)>K_{\nu}^2(u)/(1-\nu).
\end{equation}
Inequality \eqref{t3}, which holds for all $\nu>-1$ and $u>0,$ was
proved using different arguments in 1951 by Thiruvenkatachar and
Nanjundiah \cite{tiru} for $\nu>-1$, in 1991 by Joshi and Bissu
\cite{joshi} for $\nu\geq0$, and recently by the first author
\cite{baricz3,bariczaust} for $\nu>-1$. Note that the constant
$1/(\nu+1)$ on the right-hand side of \eqref{t3} is the best
possible, as it was pointed out recently in \cite{bariczaust}. The
Tur\'an type inequality \eqref{t4}, which holds for all $\nu>1$ and
$u>0,$ was proved recently in \cite{bariczaust}. We note that the
constant $1/(1-\nu)$ on the right-hand side of \eqref{t4} is the
best possible.

Now, consider the corresponding counterparts of \eqref{b1} and
\eqref{b2}, namely
\begin{equation}\label{b3}
uI_{\nu}'(u)/I_{\nu}(u)>\sqrt{u^2\nu/(\nu+1)+\nu^2}
\end{equation}
and
\begin{equation}\label{b4}
uK_{\nu}'(u)/K_{\nu}(u)>-\sqrt{u^2\nu/(\nu-1)+\nu^2}.
\end{equation}
The inequality \eqref{b3} is equivalent to \eqref{t3} and holds true
for all $\nu>-1$ and $u>0,$ as it was pointed out in
\cite{bariczprod}. This inequality appears also on Phillips and
Malin's paper \cite{phillips} for $\nu=n\geq 1,$ a positive integer.
The inequality \eqref{b4} is valid for all $\nu>1$ and $u>0.$ For
$\nu=n>1,$ a positive integer, it was proved by Phillips and Malin
\cite{phillips}, and for $\nu>1$ real by the first author
\cite{bariczaust}. Note that \eqref{b4} is equivalent to \eqref{t4},
which can be verified by using the corresponding recurrence
relations for the modified Bessel function of the second kind,
mentioned above.

Now, our aim is to show that by using the same argument as in the
proof of \eqref{l1} and \eqref{l2} we can obtain easily the
counterparts of \eqref{l1} and \eqref{l2}. Namely, by using
\eqref{r1} we conclude that \eqref{t3} or \eqref{b3} is equivalent
to
\begin{equation}\label{l3}
\frac{I_{\nu}(u)}{I_{\nu-1}(u)}<\frac{-\nu+\sqrt{\frac{\nu}{\nu+1}u^2+\nu^2}}{\frac{\nu}{\nu+1}u},
\end{equation}
which holds for all $u>0$ and all $\nu>0.$ Similarly, in view
\eqref{r2} we conclude that \eqref{t4} or \eqref{b4} is equivalent
to
\begin{equation}\label{l4}
\frac{K_{\nu}(u)}{K_{\nu-1}(u)}>\frac{\nu+\sqrt{\frac{\nu}{\nu-1}u^2+\nu^2}}{\frac{\nu}{\nu-1}u},
\end{equation}
which holds for all $\nu>1$ and all $u>0.$ We note that inequalities
\eqref{l3} and \eqref{l4} in the above forms seem to be new.

\begin{remark}
{\em We would like to take the opportunity to point out some minor
errors, which we found in the papers \cite{bariczprod,bariczaust}.
In \cite{bariczprod} the first author claimed that \eqref{b4} is
equivalent to the Tur\'an type inequality \cite[Eq. 3.3]{bariczprod}
\begin{equation}\label{t5}
(\nu-1)K_{\nu-1}(u)K_{\nu+1}(u)-(2\nu-1)\left[K_{\nu}(u)\right]^2<0
\end{equation}
and conjectured that \eqref{t5} holds true for all real $\nu\geq0.$
Unfortunately, this claim is not true. We note that \eqref{b4} is
actually equivalent to Tur\'an type inequality \eqref{t4}. This can
be verified by using the recurrence relations \eqref{r2} and
\eqref{r3} and by using the fact that $K_{\nu}$ is decreasing on
$(0,\infty)$ for all $\nu\in\mathbb{R}.$ All the same \eqref{t5} is
valid for all $\nu\in[0,1]$ and $u>0,$ it follows from \eqref{t2},
as it was pointed out in \cite{bariczaust}. Moreover, in
\cite{bariczaust} the first displayed inequality after Theorem 3.1
should be
\begin{equation}\label{t6}
K_{\nu}^2(u)-K_{\nu-1}(u)K_{\nu+1}(u)<\frac{\nu}{1-\nu}K_{\nu}^2(u)
\end{equation}
just like before Theorem 3.1 in \cite{bariczaust}. This is an
equivalent form of \eqref{t5} and in view of the fact that the
constant zero is the best possible in \eqref{t2} we conclude that
for $\nu>1$ the inequality \eqref{t6} or \eqref{t5} does not hold.
Moreover, the claim in \cite{bariczaust} that \eqref{t4} improves
\eqref{t5}, when $\nu>1,$ is not true. In view of \eqref{t6}, there
is no connection between \eqref{t4} and \eqref{t5}.}

{\em We note that using the recurrence relation
$K_{\nu}(u)=K_{-\nu}(u),$ the inequality \eqref{t5} can be rewritten
as
\begin{equation}\label{t7}
(\nu+1)K_{\nu-1}(u)K_{\nu+1}(u)-(2\nu+1)\left[K_{\nu}(u)\right]^2>0.
\end{equation}
In \cite{bariczprod} we conjectured that \eqref{t7} holds true for
all $\nu\leq 0$ and $u>0.$ It is important to point out here that in
view of the above discussion on \eqref{t5} the inequality \eqref{t7}
holds true for $\nu\in[-1,0],$ but does not holds for $\nu\leq-1.$}

{\em Finally, we note that in the proof of Theorem 3.1 in
\cite{bariczaust} the explicit formulas for the function
$\phi_{\nu}:(0,\infty)\to\mathbb{R},$ defined by
$$\phi_{\nu}(u)=1-\frac{K_{\nu-1}(u)K_{\nu+1}(u)}{K_{\nu}^2(u)},$$
and its derivative should be written as
\begin{align*}\phi_{\nu}(u)&=-\frac{1}{u}\left[\frac{4}{\pi^2}\int_0^{\infty}\frac{u\gamma(t)dt}{u^2+t^2}+
\frac{4}{\pi^2}\int_0^{\infty}\frac{u(t^2-u^2)\gamma(t)dt}{(u^2+t^2)^2}\right]\\&=
-\frac{4}{\pi^2}\int_0^{\infty}\frac{2t^2\gamma(t)dt}{(u^2+t^2)^2},\end{align*}
and consequently
$$\phi'_{\nu}(u)=\frac{32}{\pi^2}\int_0^{\infty}\frac{ut^2\gamma(t)dt}{(u^2+t^2)^3}.$$
Fortunately, this mistake does not affect the proof of Theorem 3.1
in \cite{bariczaust}, the proof of Tur\'an type inequalities
\eqref{t2} and \eqref{t4} is correct with the above small
modifications.}
\end{remark}

\section{Tur\'an type inequalities for modified Bessel functions}
In this section we are going to present some new Tur\'an type
inequalities for modified Bessel functions of the first and second
kind. As we can see below these inequalities are consequences of
some more general results on these functions. Before we state and
prove the first main result of this paper let us recall some basics.
By definition, a function $f$ is said to be completely monotonic
(c.m.) on an interval $\Delta$ if $f$ has derivatives of all orders
on $\Delta$ which alternate successively in sign, that is,
$$
(-1)^nf^{(n)}(u)\geq0
$$
for all $u\in\Delta$ and for all $n\geq0.$ It is known that c.m.
functions play an eminent role in areas like probability theory,
numerical analysis, physics, and theory of special functions. An
interesting exposition of the main results on c.m. functions is
given in Widder's book \cite{widder} (see also \cite{miller}).

The following results complement the discussion given Section 1.
\begin{theorem}\label{th1}
Let $I_{\nu}$ and $K_{\nu}$ be the modified Bessel functions of the
first and second kind. Then the following assertions are true:
\begin{enumerate}
\item the function $\nu\mapsto I_{\sqrt{\nu}}(u)$ is log-convex on $(0,\infty)$ for all $u>0$ fixed;
\item the function $\nu\mapsto K_{\sqrt{\nu}}(u)$ is log-concave on $(0,\infty)$ for all $u>0$ fixed;
\item the function $\nu\mapsto K_{\sqrt{\nu}}(u)/K_{\sqrt{\nu}+1}(u)$
is decreasing on $(0,\infty)$ for all $u>0$ fixed;
\item the function $\nu\mapsto
I_{\sqrt{\nu}}(u)K_{\sqrt{\nu}}(u)$ is log-convex on $(0,\infty)$
for all $u>0$ fixed;
\item the function $\nu\mapsto
I_{\sqrt{\nu}}(u)/K_{\sqrt{\nu}}(u)$ is log-convex on $(0,\infty)$
for all $u>0$ fixed;
\item the function $\nu\mapsto
I_{\nu}(u)/K_{\nu}(u)$ is log-concave on $(-1,\infty)$ for all $u>0$
fixed.
\end{enumerate}
In particular, for all $\nu>0$ and $u>0,$ the following Tur\'an type
inequalities hold $$I_{\sqrt{\nu+1}}^2(u)\leq
I_{\sqrt{\nu}}(u)I_{\sqrt{\nu+2}}(u),$$ $$K_{\sqrt{\nu+1}}^2(u)\geq
K_{\sqrt{\nu}}(u)K_{\sqrt{\nu+2}}(u),$$
$$K_{\sqrt{\nu+1}}(u)K_{\sqrt{\nu}+1}(u)\leq
K_{\sqrt{\nu}}(u)K_{\sqrt{\nu+1}+1}(u),$$
$$I_{\sqrt{\nu+1}}^2(u)K_{\sqrt{\nu+1}}^2(u)\leq
I_{\sqrt{\nu}}(u)K_{\sqrt{\nu}}(u)I_{\sqrt{\nu+2}}(u)K_{\sqrt{\nu+2}}(u),$$
$$I_{\sqrt{\nu+1}}^2(u)/K_{\sqrt{\nu+1}}^2(u)\leq
\left[I_{\sqrt{\nu}}(u)/K_{\sqrt{\nu}}(u)\right]\left[I_{\sqrt{\nu+2}}(u)/K_{\sqrt{\nu+2}}(u)\right]$$
and
$$I_{{\nu+1}}^2(u)/K_{{\nu+1}}^2(u)\leq
\left[I_{{\nu}}(u)/K_{{\nu}}(u)\right]\left[I_{{\nu+2}}(u)/K_{{\nu+2}}(u)\right]$$
\end{theorem}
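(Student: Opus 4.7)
\smallskip
\noindent\textbf{Proof plan.} The strategy is to let (1)--(2) carry the real content, have (3), (5), (6) reduce either to those or to results recalled in Section~1, and treat (4) via its own integral representation.

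I would begin with the reductions. Part~(6) is immediate: by Baricz~\cite{baricz4} the function $\nu\mapsto I_\nu(u)$ is log-concave on $(-1,\infty)$, and by Ismail--Muldoon $\nu\mapsto K_\nu(u)$ is log-convex on $\mathbb{R}$, so $(\log(I_\nu/K_\nu))''=(\log I_\nu)''-(\log K_\nu)''\leq 0$. For~(3), the Ismail--Muldoon result recalled in Section~1 says that $\nu\mapsto K_{\nu+1}(u)/K_\nu(u)$ is increasing on $\mathbb{R}$, so $\nu\mapsto K_\nu(u)/K_{\nu+1}(u)$ is decreasing, and precomposing with the increasing map $\nu\mapsto\sqrt{\nu}$ preserves monotonicity. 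Once~(1) and~(2) are in hand, (5)~is automatic, since $\log(I_{\sqrt{\nu}}/K_{\sqrt{\nu}})$ is the sum of the two convex functions $\log I_{\sqrt{\nu}}$ and $-\log K_{\sqrt{\nu}}$.

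For~(1) I would start from the Poisson-type representation
\[
I_\nu(u)=\frac{2(u/2)^\nu}{\sqrt{\pi}\,\Gamma(\nu+1/2)}\int_0^{\pi/2}\cosh(u\sin\theta)\cos^{2\nu}\theta\,d\theta,\qquad\nu>-1/2,
\]
and rewrite $\cos^{2\sqrt{\nu}}\theta=e^{-2\sqrt{\nu}\,|\log\cos\theta|}$. An elementary second-derivative check shows that $\nu\mapsto e^{-2\sqrt{\nu}\,\lambda}$ is log-convex on $(0,\infty)$ for every $\lambda\geq 0$, and a H\"older--Rogers inequality on the $\theta$-integral then transfers log-convexity to the integral; the prefactor $(u/2)^{\sqrt{\nu}}/\Gamma(\sqrt{\nu}+1/2)$ would be handled via the series expansion $I_{\sqrt{\nu}}(u)=\sum_{k\geq 0}(u/2)^{2k+\sqrt{\nu}}/(k!\,\Gamma(\sqrt{\nu}+k+1))$ together with standard log-convexity properties of $1/\Gamma$. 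For~(2), the natural starting point is $K_\nu(u)=\int_0^\infty e^{-u\cosh t}\cosh(\nu t)\,dt$: the function $\nu\mapsto\cosh(\sqrt{\nu}\,t)$ is log-concave on $(0,\infty)$ for each fixed $t\geq 0$ (a consequence of $\sinh(2\sqrt{\nu}\,t)\geq 2\sqrt{\nu}\,t$), and I would conclude by computing $(\log K_{\sqrt{\nu}}(u))''$ from the integral representation and controlling the resulting expression by a Cauchy--Schwarz-type estimate against the measure $e^{-u\cosh t}dt$. For~(4), a Nicholson-type integral representation of $I_\nu(u)K_\nu(u)$ with the $\nu$-dependence entering through an $e^{-c\nu^2\sigma(t)}$-type factor would, after the substitution $\nu=\sqrt{\mu}$, produce a log-convex integrand in $\mu$, and H\"older--Rogers would conclude.

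Each displayed Tur\'an-type inequality is the three-point specialisation of the corresponding log-convexity/log-concavity at $\nu,\nu+1,\nu+2$: for log-convex $f$ one has $f(\nu+1)^2\leq f(\nu)f(\nu+2)$ (since $\nu+1$ is the midpoint of $\nu$ and $\nu+2$), with the reverse inequality in the log-concave case; the third inequality is the discrete restatement of~(3) and the last one specialises~(6). The main obstacle I anticipate is~(2): integration does not in general preserve log-concavity, and the integrand $e^{-u\cosh t}\cosh(\sqrt{\nu}\,t)$ is not jointly log-concave in $(\nu,t)$ (the $t$-second derivative of $\log\cosh(\sqrt{\nu}\,t)$ is positive), so Pr\'ekopa--Leindler is unavailable and the second-derivative computation must be carried out directly from the integral representation. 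An analogous technical difficulty is expected in~(4), where the very existence of a convenient Nicholson-type representation with the right $\nu$-dependence is itself the nontrivial point.
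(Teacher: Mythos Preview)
Your reductions for (3), (5), (6) coincide with the paper's; the paper even records your Ismail--Muldoon argument for (3) as an alternative proof.

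For (1), (2), (4) the paper takes a completely different and much shorter route. It quotes three complete-monotonicity results from the literature: Hartman--Watson \cite{hartman1} proved that $\nu^2\mapsto I_\nu(u)$ and $\nu^2\mapsto I_\nu(u)K_\nu(u)$ are completely monotone on $[0,\infty)$, and Hartman \cite{hartman} proved that $\nu^2\mapsto 1/K_\nu(u)$ is completely monotone. Since every nonnegative completely monotone function is automatically log-convex (Widder \cite{widder}), parts (1), (4) and (2) follow at once, with no further computation. The ``convenient Nicholson-type representation with the right $\nu$-dependence'' you were hoping to find for (4) is precisely what the Hartman--Watson theorem packages up.

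Your direct approach to (1) contains a concrete error: $1/\Gamma$ is log-\emph{concave}, not log-convex (this is the Bohr--Mollerup side of things), so the ``standard log-convexity properties of $1/\Gamma$'' you invoke do not exist. More decisively, the individual series terms $\nu\mapsto (u/2)^{\sqrt{\nu}}/\Gamma(\sqrt{\nu}+k+1)$ are \emph{not} log-convex on $(0,\infty)$ once $u$ is moderately large: writing the log as $\sqrt{\nu}\log(u/2)-\log\Gamma(\sqrt{\nu}+k+1)$ and differentiating twice, the bracket that controls the sign tends to $-\log(u/2)-\gamma$ as $\nu\to 0^+$ when $k=0$, which is negative as soon as $u>2e^{-\gamma}\approx 1.12$. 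Hence neither the term-by-term H\"older argument nor the Poisson-plus-prefactor split can work uniformly in $u$. The obstacles you yourself flag in (2) and (4) are equally real---integration does not preserve log-concavity, and no off-the-shelf product formula has the $e^{-c\nu^2}$ shape you need---and the paper's approach sidesteps all of this by importing the hard analysis from \cite{hartman,hartman1} as black boxes.
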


\begin{proof}[\bf Proof]
For the proof of (1) -- (4) we use the following results for
modified Bessel functions:
\begin{enumerate}
\item[(a)] the function $\nu^2\mapsto I_{\nu}(u)$ is
c.m. on $[0,\infty)$ for each $u>0$ fixed;
\item[(b)] the function $\nu^2\mapsto 1/K_{\nu}(u)$ is
c.m. on $[0,\infty)$ for each $u>0$ fixed;
\item[(c)] the function $\nu\mapsto K_{\sqrt{\nu}+\alpha}(u)/K_{\sqrt{\nu}+\alpha+n}(u)$ is c.m. on $[0,\infty)$ for all $u>0,$ $\alpha\geq0$ and
$n\in\{1,2,3,{\dots}\}$ fixed;
\item[(d)] the function $\nu^2\mapsto I_{\nu}(u)K_{\nu}(v)$ is
c.m. on $(0,\infty)$ for each $v\geq u>0$ fixed.
\end{enumerate}

Parts (a) and (d) were proved by Hartman and Watson \cite{hartman1},
part (b) is due to Hartman \cite{hartman} and part (c) was obtained
by Ismail \cite{isma}. Now, recall that if the function $f$ is c.m.
and the function $g$ is nonnegative with a c.m. derivative, then the
composite function $f\circ g$ is also c.m. (see
\cite{bochner,widder}). Thus, since $\nu\mapsto [\sqrt{\nu}]'$ is
c.m. on $(0,\infty),$ from parts (a) -- (d) we obtain the following
results:
\begin{enumerate}
\item[(i)] the function $\nu\mapsto I_{\sqrt{\nu}}(u)$ is
c.m. on $(0,\infty)$ for each $u>0$ fixed;
\item[(ii)] the function $\nu\mapsto 1/K_{\sqrt{\nu}}(u)$ is
c.m. on $(0,\infty)$ for each $u>0$ fixed;
\item[(iii)] the function $\nu\mapsto K_{\sqrt{\nu}}(u)/K_{\sqrt{\nu}+1}(u)$ is c.m. on $(0,\infty)$ for all $u>0$ fixed;
\item[(iv)] the function $\nu\mapsto I_{\sqrt{\nu}}(u)K_{\sqrt{\nu}}(u)$ is
c.m. on $(0,\infty)$ for each $u>0$ fixed.
\end{enumerate}
On the other hand, each nonnegative c.m. function is log-convex (see
\cite{widder}), and thus parts (i), (ii) and (iv) imply parts (1),
(2) and (4) of this theorem, while part (3) follows from (iii).

We note that actually part (3) can be proved directly by using the
formula (see \cite{gros,ismail2})
$$\frac{K_{\nu-1}(\sqrt{u})}{\sqrt{u}K_{\nu}(\sqrt{u})}=\frac{4}{\pi^2}\int_0^{\infty}\frac{\gamma_{\nu}(t)dt}{u+t^2},\
\ \ \mbox{where}\ \ \
\gamma_{\nu}(t)=\frac{t^{-1}}{J_{\nu}^2(t)+Y_{\nu}^2(t)}
$$
and $u,\nu>0.$ Here $J_{\nu}$ and $Y_{\nu}$ stand for the Bessel
function of the first and second kind, respectively. More precisely,
in view of the Nicholson formula \cite{watson}
$$J_{\nu}^2(u)+Y_{\nu}^2(u)=\frac{8}{\pi^2}\int_0^{\infty}K_0(2u\sinh t)\cosh(2\nu t)dt,$$
the function $\nu\mapsto\gamma_{\nu}(t)$ is decreasing on
$(0,\infty)$ and so is the function $\nu\mapsto
\gamma_{\sqrt{\nu}+1}(t)$ for all $t>0$ fixed. This in turn implies
that the function
$$\nu\mapsto\frac{K_{\sqrt{\nu}}(u)}{K_{\sqrt{\nu}+1}(u)}=\frac{4u}{\pi^2}\int_0^{\infty}\frac{\gamma_{\sqrt{\nu}+1}(t)dt}{u^2+t^2}$$
is decreasing on $(0,\infty)$ for all $u>0$ fixed.

Finally, we note that part (3) can be obtained also directly by
using the fact that the function $\nu\mapsto
K_{\nu+a}(u)/K_{\nu}(u)$ is increasing on $\mathbb{R}$ for each
fixed $u>0$ and $a>0$ (see \cite{ismail}).

(5) This follows easily from parts (1) and (2) of this theorem.

(6) Similarly, this follows from the facts (see \cite{baricz4}) that
$\nu\mapsto I_{\nu}(u)$ is log-concave on $(-1,\infty)$ and
$\nu\mapsto K_{\nu}(u)$ is log-convex on $\mathbb{R}$ for all $u>0$
fixed.
\end{proof}

\section{Monotonicity properties of the product of modified Bessel
functions}

Our second main result reads as follows.
\begin{theorem}\label{th2}
The function $\nu\mapsto I_{\nu}(u)K_{\nu}(u)$ is decreasing and
$(0,\infty)$ for all $u>0$ fixed. Moreover, for all $\nu\geq1/2$ and
$u>0$ we have
\begin{equation}\label{h2}
2I_{\nu}(u)K_{\nu}(u)\leq{I_{\nu-1}(u)K_{\nu-1}(u)+I_{\nu+1}(u)K_{\nu+1}(u)}.
\end{equation}
In particular, the sequence $\{I_n(u)K_n(u)\}_{n\geq1}$ is
decreasing and convex and for all $u>0$ we have
\begin{equation}\label{h1}
I_0(u)K_0(u)>I_1(u)K_1(u)>I_2(u)K_2(u)> \dots>
I_n(u)K_{n}(u)>\cdots.
\end{equation}
\end{theorem}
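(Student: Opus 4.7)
The plan is to split the proof into two logically independent parts: the monotonicity statement and the three-term inequality~\eqref{h2}.

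For the first claim, that $\nu\mapsto I_\nu(u)K_\nu(u)$ is decreasing on $(0,\infty)$, I would invoke the Hartman--Watson fact recalled as~(d) in the proof of Theorem~\ref{th1}: for each fixed $u>0$, the function $\nu^2\mapsto I_\nu(u)K_\nu(u)$ is completely monotonic on $[0,\infty)$. A completely monotonic function is in particular decreasing, and $\nu\mapsto\nu^2$ is strictly increasing on $(0,\infty)$, so the composition $\nu\mapsto I_\nu(u)K_\nu(u)$ is strictly decreasing on $(0,\infty)$. This already yields the chain~\eqref{h1} and the decreasing half of the ``in particular'' clause.

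For the three-term inequality~\eqref{h2}, my first step would be to reduce it to a product bound on logarithmic derivatives. Using the recurrences~\eqref{r1},~\eqref{r2}, and~\eqref{r3}, a direct expansion in which all cross-terms cancel gives
$$I_{\nu-1}(u)K_{\nu-1}(u)+I_{\nu+1}(u)K_{\nu+1}(u)=-\frac{2\nu^{2}}{u^{2}}\,I_\nu(u)K_\nu(u)-2\,I'_\nu(u)K'_\nu(u),$$
so that~\eqref{h2} is equivalent, after dividing by $I_\nu(u)K_\nu(u)>0$, to the product bound
$$\frac{uI'_\nu(u)}{I_\nu(u)}\cdot\left(-\frac{uK'_\nu(u)}{K_\nu(u)}\right)\geq u^{2}+\nu^{2}.$$
Writing $p$ and $q$ for the two factors on the left, \eqref{b1} and~\eqref{b2} give $0<p<\sqrt{u^{2}+\nu^{2}}<q$, while the Wronskian $I_\nu(u)K'_\nu(u)-I'_\nu(u)K_\nu(u)=-1/u$ yields the companion identity
$$p+q=\frac{1}{I_\nu(u)K_\nu(u)}.$$
My next step would be to combine this identity with a sharper lower bound on $p$ (such as the one equivalent to~\eqref{b3}) and a matching upper bound on $I_\nu(u)K_\nu(u)$, for which the Nicholson-type representation $I_\nu(u)K_\nu(u)=\int_{0}^{\infty}J_{2\nu}(2ut)(t^{2}+1)^{-1/2}\,dt$ (valid for $\nu>-1/2$) is the natural tool, in order to beat $u^{2}+\nu^{2}$ from below. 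Once~\eqref{h2} is in hand, the convexity assertion for $\{I_n(u)K_n(u)\}_{n\geq1}$ in the ``in particular'' clause is just the specialisation $\nu=n\geq1$, the decreasing assertion having been obtained already from part~(1).

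The hard part will be the product estimate $pq\geq u^{2}+\nu^{2}$. Both $p$ and $q$ converge to $\sqrt{u^{2}+\nu^{2}}$, but from opposite sides, so the first-order bounds~\eqref{b1} and~\eqref{b2} alone leave no room for a naive combination. The essential ingredient is the Wronskian identity $p+q=1/(I_\nu K_\nu)$, which couples $p$ and $q$ through the product $I_\nu(u)K_\nu(u)$; controlling this product sharply in the stated range of~$\nu$ is what will require the most care.
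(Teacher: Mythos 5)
Your treatment of the monotonicity claim coincides with the paper's (Hartman--Watson complete monotonicity of $\nu^2\mapsto I_\nu(u)K_\nu(u)$ composed with $\sqrt{\cdot}$), and your algebraic reduction of \eqref{h2} is correct: expanding $I_{\nu\pm1}(u)K_{\nu\pm1}(u)$ via \eqref{r1}--\eqref{r3} the cross terms do cancel, and \eqref{h2} is equivalent to $pq\geq u^2+\nu^2$, where $p=uI_\nu'(u)/I_\nu(u)$ and $q=-uK_\nu'(u)/K_\nu(u)$. The problem is that this product estimate --- which is the entire content of \eqref{h2} --- is never proved. As you note yourself, \eqref{b1} and \eqref{b2} place $p$ and $q$ on opposite sides of $\sqrt{u^2+\nu^2}$, so no combination of the listed pointwise bounds closes the gap: even using the sharper lower bound \eqref{b3} for $p$ together with \eqref{b2} for $q$ gives only $pq>\sqrt{\left(\frac{\nu}{\nu+1}u^2+\nu^2\right)(u^2+\nu^2)}$, which is strictly below $u^2+\nu^2$. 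The Wronskian identity $p+q=1/(I_\nu(u)K_\nu(u))$ and the proposed Nicholson-type upper bound on $I_\nu(u)K_\nu(u)$ are announced as a plan ("my next step would be\dots") but not carried out, and it is not evident that they suffice. So the central inequality of the theorem remains an assertion.

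The paper sidesteps this difficulty entirely by working with $P_\nu(u)=I_\nu(u)K_\nu(u)$ directly: it combines the Penfold et al.\ recurrences \eqref{h5} and \eqref{h6} with Hartman's theorem that $u\mapsto uP_\nu(u)$ is concave on $(0,\infty)$ for $\nu>1/2$ (extended to $\nu=1/2$ via $2uI_{1/2}(u)K_{1/2}(u)=1-e^{-2u}$). Concavity gives $uP_\nu''(u)\leq-2P_\nu'(u)$, and substituting the two recurrences turns this in one line into \eqref{h4}, i.e.\ \eqref{h2}. If you want to complete your route, the concavity of $u\mapsto uP_\nu(u)$ is exactly the missing external input that couples $p$ and $q$; without some such second-order information the first-order bounds \eqref{b1}--\eqref{b4} cannot yield $pq\geq u^2+\nu^2$. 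A further small point: the step $I_0(u)K_0(u)>I_1(u)K_1(u)$ in \eqref{h1} does not follow from monotonicity on the open interval $(0,\infty)$ alone; the paper obtains it from \eqref{h2} at $\nu=1$, whereas your argument needs at least a remark about the endpoint $\nu=0$.
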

\begin{proof}[\bf Proof]
Recall that the function  $\nu^2\mapsto I_{\nu}(u)K_{\nu}(v)$ is
c.m. on $(0,\infty)$ for each $v\geq u>0$ fixed (see
\cite{hartman1}), and consequently the function $\nu\mapsto
I_{\sqrt{\nu}}(u)K_{\sqrt{\nu}}(u)$ is also c.m. on $(0,\infty)$ for
each $u>0$ fixed (see part (iv) in the proof of Theorem \ref{th1}).
In particular, the function $\nu\mapsto
I_{\sqrt{\nu}}(u)K_{\sqrt{\nu}}(u)$ is decreasing on $(0,\infty)$
for each $u>0$ fixed, and hence the function $\nu\mapsto
I_{\nu}(u)K_{\nu}(u)$ is also decreasing and $(0,\infty)$ for all
$u>0$ fixed. Thus, for all $u>0$ we have
$$I_1(u)K_1(u)>I_2(u)K_2(u)> \dots> I_n(u)K_{n}(u)>{\cdots}.$$

Now, we are going to prove \eqref{h2}. By using the notation
$P_{\nu}(u)=I_{\nu}(u)K_{\nu}(u),$ the inequality \eqref{h2} can be
rewritten as
\begin{equation}\label{h4}
2P_{\nu}(u)\leq P_{\nu-1}(u)+P_{\nu+1}(u).
\end{equation}
We note that for the function $P_{\nu}$ the following recurrence
formulas (see \cite{penfold}) are valid
\begin{equation}\label{h5}
2\nu P_{\nu}'(u)=u(P_{\nu+1}(u)-P_{\nu-1}(u))
\end{equation}
and
\begin{equation}\label{h6}
2\nu P_{\nu}''(u)=4\nu
P_{\nu}(u)-(2\nu-1)P_{\nu-1}(u)-(2\nu+1)P_{\nu+1}(u).
\end{equation}
On the other hand, owing to Hartman \cite{hartman2}, we know that
the function $u\mapsto uP_{\nu}(u)$ is concave on $(0,\infty)$ for
all $\nu>1/2.$ Since $u\mapsto 2uI_{1/2}(u)K_{1/2}(u)=1-e^{-2u}$ is
concave on $(0,\infty),$ we conclude that in fact the function
$u\mapsto uP_{\nu}(u)$ is concave on $(0,\infty)$ for all $\nu\geq
1/2.$ Consequently, for all $u>0$ and $\nu\geq1/2$ we have
$$uP_{\nu}''(u)\leq -2P_{\nu}'(u).$$
Now, combining this with \eqref{h5} and \eqref{h6} we obtain for all
$u>0$ and $\nu\geq 1/2$
$$2\nu\left[2P_{\nu}(u)-(P_{\nu-1}(u)+P_{\nu+1}(u))\right]\leq0,$$
which is equivalent to \eqref{h4}. Finally, by using \eqref{h2} for
$\nu=1$ we obtain $$P_1(u)-P_0(u)\leq P_2(u)-P_{1}(u)$$ and hence by
using the fact that the sequence $\{I_n(u)K_n(u)\}_{n\geq1}$ is
decreasing for each $u>0,$ it follows that the sequence
$\{I_n(u)K_n(u)\}_{n\geq0}$ is also decreasing for each $u>0,$ i.e.
\eqref{h1} is valid. With this the proof is complete.
\end{proof}

{\bf Comments and concluding remarks}

\begin{enumerate}

\item[\bf 1.] In \cite{h2,h1,h5} (see also \cite{h3}),
the authors study the existence, stability and interaction of
localized structures in a one-dimensional generalized
FitzHugh-Nagumo type model. Recently, van Heijster and Sandstede
\cite{h4} started to analyze the existence and stability of radially
symmetric solutions in the planar variant of this model. The product
of modified Bessel functions $I_{\nu}(u)K_{\nu}(u)$ discussed in
this note arises naturally in their stability analysis, and the
monotonicity condition \eqref{h1} is important to conclude
(in)stability of these radially symmetric solutions.

After we have finished the first draft of this paper, van Heijster
informed us that the monotonicity of $\nu\mapsto
I_{\nu}(u)K_{\nu}(u)$ is in fact an immediate consequence of the
integral formula \cite[p. 98]{magnus}
$$I_{\nu}(u)K_{\nu}(u)=\int_0^{\infty}I_{2\nu}(2u\sinh t)e^{-2u\cosh t}dt$$
and the monotonicity of $\nu\mapsto I_{\nu}(u).$ More precisely, due
to Cochran \cite{cochran} we know that $\nu\mapsto I_{\nu}(u)$ is
strictly decreasing on $[0,\infty)$ for all $u>0$ fixed. By using
this result and the above integral formula we conclude that in fact
$\nu\mapsto I_{\nu}(u)K_{\nu}(u)$ is strictly decreasing on
$[0,\infty)$ for all $u>0$ fixed. Thanks are due to van Heijster for
the above information.

\item[\bf 2.] Let us recall that the function $u\mapsto
P_{\nu}(u)=I_{\nu}(u)K_{\nu}(u)$ is strictly decreasing on
$(0,\infty)$ for all $\nu>-1.$ For, $\nu=n\geq1,$ a positive
integer, this was proved in 1950 by Phillips and Malin
\cite{phillips}, for $\nu\geq0$ real in 2007 by Penfold et al.
\cite{penfold}, and for $\nu\geq-1/2$ and $\nu>-1$ recently by the
first author \cite{bariczprod,bariczaust}. We note that by using
\eqref{h5} and the monotonicity of $\nu\mapsto P_{\nu}(u)$, proved
in Theorem \ref{th2}, we obtain immediately that $u\mapsto
P_{\nu}(u)$ is strictly decreasing on $(0,\infty)$ for all $\nu>1.$

Now, let us consider the function $u\mapsto 2uP_{\nu}(u).$ Hartman
and Watson \cite{hartman1} proved that this function is a continuous
cumulative distribution function on $(0,\infty)$ for all
$\nu\geq1/2.$ This function actually maps $(0,\infty)$ into $(0,1).$
Moreover, later Hartman \cite{hartman2} proved that $u\mapsto
2uP_{\nu}(u)$ is concave on $(0,\infty)$ for all $\nu>1/2,$ and as
we pointed out in the proof of Theorem \ref{th2}, this result is
valid also for $\nu=1/2.$ We would like to point out here that
actually the monotonicity of $u\mapsto P_{\nu}(u)$ for $\nu\geq1/2$
is almost trivial by using the aforementioned concavity result of
Hartman. To see this let us recall a particular form of the Pinelis
version of the monotone form of l'Hospital's rule (see \cite{pin}):

{\em Let $f,g:(a,b)\subset\mathbb{R}\to\mathbb{R}$ be differentiable
functions on $(a,b)$ with $g'(u)\neq0$ for all $u\in(a,b).$
Furthermore, suppose that $\lim_{u\to a}f(u)=\lim_{u\to a}g(u)=0$
and $f'/g'$ is decreasing on $(a,b).$ Then the ratio $f/g$ is
decreasing too on $(a,b).$}

Appealing to this result, since
$\lim_{u\to0}2uP_{\nu}(u)=\lim_{u\to0}u/\nu=0,$ for $\nu\geq1/2,$ to
prove the monotonicity of $u\mapsto P_{\nu}(u)=uP_{\nu}(u)/u$ it is
enough to show that $u\mapsto [uP_{\nu}(u)]'/u'$ is decreasing,
which is clearly true because $u\mapsto uP_{\nu}(u)$ is concave.

\item[\bf 3.] Finally, we note that the product $P_{\nu}$ is useful
also in other problems of applied mathematics. For example, in 1986
Cantrell \cite{cantrell} derived tight upper bounds for the function
$u\mapsto uI_{n+1}(u)K_{m+1}(u),$ in order to obtain suitable
truncation and transient errors in the computation of the
generalized Marcum $Q-$function.
\end{enumerate}

Motivated by Theorems \ref{th1} and \ref{th2} we conjecture the
following.

\subsection*{Conjecture} The function $\nu\mapsto
I_{\nu}(u)K_{\nu}(u)$ is log-convex on $(-1,\infty)$ for all $u>0$
fixed.

\subsection*{Acknowledgments} The research of \'A. Baricz was supported by the J\'anos Bolyai Research
Scholarship of the Hungarian Academy of Sciences and was completed
during this author's visit to the Indian Institute of Technology
Madras. The visit was supported by a partial travel grant from the
Commission on Development and Exchanges, International Mathematical
Union. The authors are indebted to Peter van Heijster for his useful
comments and suggestions, which enhanced this paper.

\end{document}